\documentclass{article}

\usepackage{graphicx}
\usepackage{indentfirst}
\usepackage{amsmath,amsfonts,amsthm,amssymb}
\usepackage{mathrsfs}
\usepackage{amscd}
\usepackage{hyperref}

\allowdisplaybreaks

\def\co{\colon\thinspace}
\DeclareMathAlphabet{\mathsfsl}{OT1}{cmss}{m}{sl}

\newcommand{\spin}{\mathrm{Spin}^c}

\newtheorem{thm}{Theorem}[section]
\newtheorem{lem}[thm]{Lemma}
\newtheorem{cor}[thm]{Corollary}
\newtheorem{prop}[thm]{Proposition}
\newtheorem{conj}[thm]{Conjecture}
\newtheorem*{thm*}{Theorem}

\theoremstyle{definition}

\newtheorem{rem}[thm]{Remark}

\begin{document}

\title{Half-integral finite surgeries on knots in $S^3$}

\author{
{Eileen LI and Yi NI}\\{\normalsize Department of Mathematics, Caltech}\\
{\normalsize 1200 E California Blvd, Pasadena, CA
91125}\\{\small\it Email\/: \quad\rm eileen.li.20854@gmail.com \quad yini@caltech.edu}}

\date{}
\maketitle

\begin{abstract}
Suppose that a hyperbolic knot in $S^3$ admits a finite surgery, Boyer and Zhang proved that the surgery slope must be either integral or half-integral, and they conjectured that the latter case does not happen. Using the correction terms in Heegaard Floer homology, we prove that if a hyperbolic knot in $S^3$ admits a half-integral finite surgery, then the knot must have the same knot Floer homology as one of eight non-hyperbolic knots which are known to admit such surgeries, and the resulting manifold must be one of ten spherical space forms. As knot Floer homology carries a lot of information about the knot, this gives a strong evidence to Boyer--Zhang's conjecture.
\end{abstract}

\section{Introduction}

Suppose that $M$ is a $3$--manifold with torus boundary, $\alpha$ is a slope on $\partial M$. Let $M(\alpha)$ be the Dehn filling along $\alpha$. If $M$ is hyperbolic, Thurston's Hyperbolic Dehn Surgery Theorem says that at most finitely many fillings are non-hyperbolic. These surgeries are called {\it exceptional surgeries}. The famous Cyclic Surgery Theorem \cite{CGLS} asserts that, if $M$ is not Seifert fibered, $\alpha,\beta$ are slopes on $\partial M$ such that both $M(\alpha)$ and $M(\beta)$ have cyclic fundamental groups, then $\Delta(\alpha,\beta)$, the distance between $\alpha,\beta$, is at most $1$. More generally, estimating the distance between any two exceptional surgery slopes is a central problem in Dehn surgery.

In \cite{BZ}, Boyer and Zhang proved that if $M$ is hyperbolic, $M(\alpha)$ has finite fundamental group (or being a finite surgery) and $M(\beta)$ has cyclic fundamental group (or being a cyclic surgery), then $|\Delta(\alpha,\beta)|\le2$. In particular, if the $\frac pq$ surgery on a hyperbolic knot $K\subset S^3$, denoted 
$S^3_{K}(\frac{p}{q})$, has a finite fundamental group, then $|q|\le2$. In fact, Boyer and Zhang made the following conjecture.

\begin{conj}\label{conj:IntFinite}
Suppose that $K\subset S^3$ is a hyperbolic knot, and that $S^3_{K}(\frac{p}{q})$ has a finite fundamental group, then $\frac pq$ must be an integer.
\end{conj}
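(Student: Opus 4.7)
Applying Boyer--Zhang's bound $|\Delta(\alpha,\beta)|\le 2$ with $\beta$ the meridian (whose filling $S^3$ is trivially cyclic) shows that any finite surgery slope $p/q$ on a hyperbolic knot in $S^3$ satisfies $|q|\le 2$, so it suffices to rule out $|q|=2$. The plan is to assume for contradiction that $K\subset S^3$ is hyperbolic and $Y:=S^3_K(p/2)$ has finite fundamental group, with $p$ odd; by the elliptization theorem $Y$ is then a spherical space form, $|H_1(Y)|=p$, and $\pi_1(Y)$ is one of the classical finite subgroups of $SO(4)$ acting freely on $S^3$.

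\textbf{Correction terms and candidate enumeration.} The main tool is the Ozsv\'ath--Szab\'o correction term $d$. Since spherical space forms are $L$-spaces, a positive finite surgery forces $K$ to be an $L$-space knot, which rigidifies its Alexander polynomial, Seifert genus, fiberedness, and the non-increasing sequence of nonnegative integers $V_s(K)$. Ozsv\'ath--Szab\'o's surgery formula expresses
\[
d\bigl(S^3_K(p/2),i\bigr)\;=\;d\bigl(L(p,2),i\bigr)\;-\;2V_{s(i)}(K)
\]
for an explicit piecewise-linear reindexing function $s(i)$, while for each candidate spherical space form the $d$-invariants over all $\spin$ structures can be computed in closed form. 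I would (a)~enumerate pairs $(p,Y)$ with $Y$ a spherical space form whose first homology has odd order $p$; (b)~for each such $Y$, match the $d$-invariant vector against the above formula to solve for the entire sequence $\{V_s(K)\}$, and hence for $\widehat{HFK}(K)$; and (c)~discard any candidate whose resulting $V_s$-sequence fails to be realized by an $L$-space knot in $S^3$ (using the constraints coming from symmetry of $\widehat{HFK}$, the Alexander polynomial, and fiberedness). If the classification of known non-hyperbolic half-integral finite surgeries is sharp, the output should match the ten spherical space forms / eight Floer-homology types realized by the known cabled non-hyperbolic examples.

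\textbf{Main obstacle.} The real difficulty is closing the gap from ``$K$ has the Floer data of one of the eight non-hyperbolic examples'' to ``$K$ is one of those knots'', and thus in particular not hyperbolic. Heegaard Floer homology does not detect hyperbolicity: it recovers the knot genus, fiberedness, Alexander polynomial, and the filtered chain homotopy type of $CFK^\infty$, but not the geometry of the complement. To upgrade the Floer-homology match to a knot-type match one would need either (i)~a geometric input --- for instance Culler--Shalen theory on the character variety, the $A$-polynomial, or a hyperbolic volume / trace-field argument --- excluding a hyperbolic $L$-space knot with the prescribed Alexander polynomial and half-integral finite surgery slope; or (ii)~a refinement of the Floer package, such as involutive Heegaard Floer or instanton/monopole Floer homology, strong enough to separate any hypothetical hyperbolic candidate from the known non-hyperbolic ones. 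I expect steps (a)--(c) to be essentially bookkeeping, and this final rigidity step to be the essential obstacle, which is why the correction-term method can be expected to yield only the ``same knot Floer homology'' version announced in the abstract, and strong evidence for --- rather than a complete proof of --- Conjecture~\ref{conj:IntFinite}.
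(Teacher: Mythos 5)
The statement you were asked to prove is Conjecture~\ref{conj:IntFinite}, and the paper does not prove it: it remains an open conjecture, for exactly the reason you identify in your final paragraph. What the paper actually establishes is Theorem~\ref{thm:TenCases}, the ``same knot Floer homology'' statement, and your proposal reconstructs that argument faithfully: reduce to $|q|=2$ via Boyer--Zhang, use that a spherical space form is an L-space to invoke the surgery formula $d(S^3_K(p/2),i)=d(L(p,2),i)-2\max\{t_{\lfloor i/2\rfloor},t_{\lfloor(p+1-i)/2\rfloor}\}$, compare against the closed-form $d$-invariants of the candidate targets, and solve for the $t_s$ (your $V_s$) subject to the L-space-knot constraints. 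Your assessment that Heegaard Floer data cannot distinguish a hypothetical hyperbolic knot from the non-hyperbolic knots sharing its $CFK^\infty$ is precisely why the paper stops at Theorem~\ref{thm:TenCases} and offers the result as evidence for, not a proof of, the conjecture. So there is no proof in the paper to measure your proposal against, and your proposal correctly declines to claim one.

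Two points where your sketch of the partial result is thinner than the paper's argument. First, ``enumerate spherical space forms with $|H_1|$ odd'' does not by itself exclude lens spaces, which can have odd order homology; the paper rules out {\bf C}-type targets by the Cyclic Surgery Theorem (the meridian and $p/2$ have distance $2$), and {\bf D}-, {\bf O}-types by parity of $|H_1|$, leaving only {\bf T}- and {\bf I}-types, all of which are $\pm\mathbb T(\frac{6n\pm3}{n})$ or $\pm\mathbb T(\frac{6n\pm5}{n})$. Second, your step (a) is an infinite enumeration as stated: one needs an a priori bound on $p$ before a finite search can finish the job. This is the content of Proposition~\ref{prop:pLarge}, where the successive differences $\delta^\varepsilon_a(\frac{p+1}2+6k+1)-\delta^\varepsilon_a(\frac{p+1}2+6k)$ are shown to be of the form $Ak+B+C_k$ with $C_k$ taking boundedly many values, which is incompatible with the requirement that each difference be $0$ or $2$ once $p$ is large. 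Neither point affects your (correct) conclusion about what the method can and cannot deliver.
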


By Perelman's resolution of the Geometrization Conjecture \cite{Pe1,Pe2,Pe3}, if a $3$--manifold has a finite fundamental group, then it is necessarily a spherical space form. In order to prove Conjecture~\ref{conj:IntFinite}, we only need to rule out {\bf T}- and {\bf I}-type spherical space forms as results of half-integer surgeries, see Section~\ref{sect:Strat} for more detail. 

In this paper, all manifolds are oriented. If $Y$ is an oriented manifold, then $-Y$ denotes the same manifold with the opposite orientation.

Let $\mathbb T$ be the exterior of the right hand trefoil, then $\mathbb T(\frac pq)$ is the manifold obtained by $\frac pq$ surgery on the right hand trefoil. It is well-known that any {\bf T}- or {\bf I}-type manifold is homeomorphic to a $\pm\mathbb T(\frac pq)$ (see Lemma~\ref{lem:SFS23}).

Let $T(p,q)$ be the $(p,q)$ torus knot, and let $[p_1,q_1;p_2,q_2]$ denote the $(p_1,q_1)$ cable of $T(p_2,q_2)$.

Our main result is the following theorem.

\begin{thm}\label{thm:TenCases}
Suppose that $K\subset S^3$ is a hyperbolic knot, and that the $\frac p2$ surgery on $K$ is a spherical space form for some odd integer $p>0$, then $K$ has the same knot Floer homology as either $T(5,2)$ or a cable of a torus knot (which must be $T(3,2)$ or $T(5,2)$), and the resulting manifold is homeomorphic to the $\frac p2$ surgery on the corresponding torus or cable knot. More precisely, the possible cases are listed in the following table:
\begin{center}
\begin{tabular}{|c|c|c|}
\hline
knot type &slope &resulting manifold\\
\hline
$T(5,2)$ &${17}/2$ &$-\mathbb T({17}/2)$\\
\hline
$T(5,2)$ &${23}/2$ &$\mathbb T({23}/3)$\\
\hline
$[11,2;3,2]$ &${43}/2$ &$\mathbb T({43}/8)$\\
\hline
$[11,2;3,2]$ &${45}/2$ &$\mathbb T({45}/8)$\\
\hline
$[13,2;3,2]$ &${51}/2$ &$\mathbb T({51}/8)$\\
\hline
$[13,2;3,2]$ &${53}/2$ &$\mathbb T({53}/8)$\\
\hline
$[19,2;5,2]$ &${77}/2$ &$-\mathbb T({77}/{12})$\\
\hline
$[21,2;5,2]$ &${83}/2$ &$\mathbb T({83}/{13})$\\
\hline
$[17,3;3,2]$ &${103}/2$ &$\mathbb T({103}/{18})$\\
\hline
$[19,3;3,2]$ &${113}/2$ &$\mathbb T({113}/{18})$\\
\hline
\end{tabular}
\end{center}
Here the first column lists the knots with which $K$ shares the same knot Floer homology.
\end{thm}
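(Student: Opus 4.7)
The plan is to compare Heegaard Floer correction terms on the two sides of the hypothesis. Every spherical space form is an L-space, so the assumption forces $S^3_K(p/2)$ to be an L-space; combined with $p>0$ this makes $K$ an L-space knot. By Ozsv\'ath--Szab\'o, $\widehat{HFK}(K)$ is then completely determined by the Alexander polynomial $\Delta_K$, which must have the restricted alternating form with strictly decreasing exponents, and the correction terms of $S^3_K(p/2)$ are computable from the torsion coefficients $t_i(K)$ of $\Delta_K$ via the Ozsv\'ath--Szab\'o rational surgery formula. On the other side, by the reduction outlined in the introduction, the target manifold must be some $\pm\mathbb T(p'/q')$; since the trefoil is itself an L-space knot, the correction terms of these candidates are computable by exactly the same formula.

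First I would enumerate the candidate targets: the finitely many $\pm\mathbb T(p'/q')$ which are \textbf{T}- or \textbf{I}-type spherical space forms with odd $|H_1|=p$. For each such target I would compute the full tuple of correction terms. Setting this tuple equal, up to an affine bijection on $\spin$-structures (and possibly a global sign), to the correction-term tuple of $S^3_K(p/2)$ produces a system of linear equations for the $t_i(K)$, subject to the positivity, monotonicity and integrality constraints imposed by the L-space knot structure. Translated back to $\Delta_K$, the system becomes a set of conditions on the gap sequence of its exponents.

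The main obstacle is solving this system tightly enough to force $\Delta_K$ to match the Alexander polynomial of one of the eight listed knots. I expect to proceed target by target: for each $\pm\mathbb T(p'/q')$ on the candidate list, show that the correction-term equations pin down all but finitely many $t_i(K)$, rule out the residual possibilities, and verify that the surviving $\Delta_K$ agrees with the Alexander polynomial of $T(5,2)$, $[11,2;3,2]$, $[13,2;3,2]$, $[19,2;5,2]$, $[21,2;5,2]$, $[17,3;3,2]$, or $[19,3;3,2]$. Because $K$ is an L-space knot, agreement of Alexander polynomials upgrades automatically to agreement of $\widehat{HFK}$, so the surviving data packages into the ten rows of the table. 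The hardest step is plausibly this uniform ruling-out: one must argue that the gap-sequence constraints admit no unexpected solutions of large genus, presumably by a careful asymptotic analysis of the correction-term equations once the leading behaviour of the $t_i$ has been determined.
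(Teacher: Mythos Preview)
Your approach is essentially the paper's: match correction terms of $S^3_K(p/2)$ against those of the $\pm\mathbb T(p/q)$ candidates up to the affine ambiguity on $\spin$ labels, read off the $t_i$, and test the L-space constraints (\ref{eq:tsProperties}). The paper's concrete realization of your ``asymptotic analysis'' is Proposition~\ref{prop:pLarge}, giving an explicit bound on $p$ beyond which no match survives, followed by a computer search below that bound; one small omission in your outline is that this search also returns $p/2=7/2$ with $\Delta_K=\Delta_{T(3,2)}$, which the paper eliminates separately via Ghiggini's theorem that knot Floer homology detects the trefoil (so a hyperbolic $K$ cannot have this $\widehat{HFK}$).
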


Bleiler and Hodgeson \cite{BH} have classified finite surgeries on all the iterated torus knots. The knots in the first column above are contained in their list.

The strategy of our proof is to compute the Heegaard Floer correction terms for the {\bf T}- and {\bf I}- type manifolds, then compare them with the correction terms of the half-integral surgeries on knots in $S^3$. If they match, then the knot Floer homology of the knots can be recovered from the correction terms.  

Heegaard Floer homology has been successfully used in the study of finite surgery, see, for example, \cite{OSzLens,OSzRatSurg,Greene,Doig}. The point here is that spherical space forms have the simplest possible Heegaard Floer homology, hence the information about the Heegaard Floer homology is completely contained in the correction terms. We will address this fact in more detail in Section~\ref{sect:Pre}.

Knot Floer homology tells us a lot about knots. For example, it detects the genus \cite{OSzGenus} and fiberedness \cite{Gh,NiFibred}. It is reasonable to expect that a knot with the same knot Floer homology as a knot in the above table must be the corresponding knot. Moreover, we propose the following conjecture, which would imply Conjecture~\ref{conj:IntFinite}.

\begin{conj}
Suppose $K\subset S^3$ has an L-space surgery. If all roots of its Alexander polynomial $\Delta_K(t)$ are unit roots, then $K$ is an iterated torus knot.
\end{conj}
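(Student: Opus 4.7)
The plan is to combine the Nielsen--Thurston classification of surface automorphisms with structural results on L-space knots. Since $K$ has an L-space surgery, $K$ is fibered by Ni's theorem \cite{NiFibred}, and prime by Krcatovich's theorem. Let $h \co F \to F$ denote the monodromy of the fiber surface $F$. The Alexander polynomial $\Delta_K(t)$ agrees, up to units in $\mathbb{Z}[t^{\pm 1}]$, with the characteristic polynomial of the induced map $h_*$ on $H_1(F;\mathbb{Q})$. By Kronecker's theorem, any monic integer polynomial all of whose roots lie on the unit circle is a product of cyclotomic polynomials, so the hypothesis forces the eigenvalues of $h_*$ to be roots of unity.

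The argument would then proceed by case analysis on the Nielsen--Thurston type of $h$. If $h$ is periodic, then the exterior of $K$ is Seifert fibered, and a classical theorem implies that a non-trivial knot in $S^3$ with Seifert fibered exterior is a torus knot, so $K = T(p,q)$ is trivially an iterated torus knot. If $h$ is reducible, then the exterior of $K$ contains an essential torus, so $K$ is a satellite. Since $K$ is prime, one would appeal to the (partially established, e.g.\ by Hedden and Hom) principle that an L-space satellite knot in $S^3$ is a cable $C_{p,q}(K')$ of an L-space knot $K'$. The cabling formula
\[
\Delta_K(t) = \Delta_{K'}(t^p)\, \Delta_{T(p,q)}(t)
\]
then shows that $\Delta_{K'}(t)$ also has only unit roots, and induction on Seifert genus (or on JSJ complexity of the exterior) presents $K'$, hence $K$, as an iterated torus knot.

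The main obstacle is the pseudo-Anosov case, equivalently the case when $K$ is hyperbolic. Here Kronecker's theorem constrains only the \emph{homological} action of the monodromy, and it is a well-known subtlety that pseudo-Anosov maps may have all homological eigenvalues on the unit circle (the topological dilatation need not be detected on $H_1$). Ruling out this case therefore requires invariants finer than $\Delta_K(t)$. The natural tools are the refined Heegaard Floer invariants of an L-space knot---the $\Upsilon$ function, the formal semigroup / gap sequence encoded by $CFK^\infty$, or the bordered/immersed-curves description of the knot complement---together with hyperbolic geometric input such as estimates on $\mathrm{vol}(S^3 \setminus K)$ or a comparison of the L-space surgery slope with boundary slopes. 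The goal would be to show that these refined invariants of a hyperbolic L-space knot are incompatible with the very rigid cyclotomic shape of $\Delta_K(t)$.

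An alternative, more combinatorial route is to bypass the geometric trichotomy: use the fact that for an L-space knot the Alexander polynomial has the alternating $\pm 1$ form $\Delta_K(t) = \sum_{i=0}^{2n}(-1)^{n-i}t^{b_i}$ with a strictly increasing exponent sequence, and classify the integer polynomials of this shape that simultaneously factor as products of cyclotomic polynomials. A Euclidean-algorithm analysis of the exponent sequences $\{b_i\}$ should force them to arise from iterated applications of the torus-knot recipe $\Delta_{T(p,q)} = (1-t^{pq})(1-t)/((1-t^p)(1-t^q))$. Combined with the fact that the Alexander polynomial of an L-space knot determines its full knot Floer homology (Ozsv\'ath--Szab\'o), and with knot Floer detection results for iterated torus knots, this would identify $K$ with an iterated torus knot as conjectured.
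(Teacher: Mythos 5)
The statement you are addressing is not proved in the paper at all: it appears there as a \emph{conjecture}, proposed by the authors precisely because it would imply the Boyer--Zhang conjecture, and they offer no argument for it. So there is no proof of the paper's to compare yours against, and the relevant question is only whether your proposal closes the problem. It does not.

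By your own account the decisive case is left open. The periodic case (Seifert fibered exterior, hence a torus knot) and the reducible case (satellite) are reasonable outlines, though even the reducible case leans on the principle that an L-space satellite knot in $S^3$ must be a cable of an L-space knot, which you correctly describe as only ``partially established''---it is itself a conjecture, not a theorem you may cite, so the induction on JSJ complexity does not yet go through. The genuine gap is the pseudo-Anosov (hyperbolic) case: Kronecker's theorem constrains only the action of the monodromy on $H_1(F)$, and, as you yourself note, a pseudo-Anosov map can have all homological eigenvalues on the unit circle. Listing candidate invariants ($\Upsilon$, the formal semigroup of the $CFK^\infty$ staircase, bordered invariants, volume bounds) and stating that ``the goal would be to show'' they are incompatible with a cyclotomic Alexander polynomial is a research program, not a step in a proof. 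Likewise, the alternative combinatorial route---classifying the strictly increasing exponent sequences of L-space knot Alexander polynomials that factor into cyclotomics---merely restates the conjecture in different language; the assertion that a Euclidean-algorithm analysis ``should force'' the iterated torus knot recipe is exactly the claim that needs proof. The conjecture remains open, and your write-up identifies the difficulty accurately without resolving it.
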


This paper is organized as follows. In Section~\ref{sect:Pre}, we recall the basic definition and properties of the correction terms. In Section~\ref{sect:Strat}, we discuss the strategy in our proof.
In Section~\ref{sect:pLarge}, we will show that half-integral finite surgeries do not exist when $p$ is sufficiently large by concrete computations involving correction terms.
Our approach in this section is inspired by Gu \cite{Gu}.

\vspace{5pt}\noindent{\bf Acknowledgements.}\quad  The second author wishes to thank Xingru Zhang for asking the question about half-integral finite surgery and explaining the background. The second author is also grateful to Liling Gu, whose work \cite{Gu} benefits our paper a lot. The first author was supported by Caltech's Summer Undergraduate Research Fellowships program. The second author was
partially supported by an AIM Five-Year Fellowship, NSF grant
numbers DMS-1103976, DMS-1252992, and an Alfred P. Sloan Research Fellowship.  

%%%%%
%%%%%
%%%%%
%%%%%
%%%%%

\section{Preliminaries on Heegaard Floer homology and correction terms}\label{sect:Pre}

Heegaard Floer homology was introduced by Ozsv\'ath and Szab\'o \cite{OSzAnn1}. Given a closed oriented $3$--manifold $Y$ and a Spin$^c$ structure $\mathfrak s\in\spin(Y)$, one can define the Heegaard Floer homology $\widehat{HF}(Y,\mathfrak s),HF^+(Y,\mathfrak s),\dots$, which are invariants of $(Y,\mathfrak s)$. When $\mathfrak s$ is torsion, there is an absolute $\mathbb Q$--grading on $HF^+(Y,\mathfrak s)$. When $Y$ is a rational homology sphere, Ozsv\'ath and Szab\'o \cite{OSzAbGr} defined a {\it correction term} $d(Y,\mathfrak s)\in\mathbb Q$, which is basically the shifting of $HF^+(Y,\mathfrak s)$ relative to $HF^+(S^3)$ in the absolute grading.

The correction terms enjoy the following symmetries:
\begin{equation}\label{eq:dSymm}
d(Y,\mathfrak s)=d(Y,J\mathfrak s),\quad d(-Y,\mathfrak s)=-d(Y,\mathfrak s),
\end{equation}
where $J\co \spin(Y)\to\spin(Y)$ is the conjugation.

Suppose that $Y$ is an integral homology sphere, $K\subset Y$ is a knot. Let $Y_K(p/q)$ be the manifold obtained by $\frac pq$--surgery on $K$. Ozsv\'ath and Szab\'o defined a natural identification $\sigma\co\mathbb Z/p\mathbb Z\to\spin(Y_K(p/q))$ \cite{OSzAbGr,OSzRatSurg}. For simplicity, we often use an integer $i$ to denote the Spin$^c$ structure $\sigma([i])$, when $[i]\in\mathbb Z/p\mathbb Z$ is the congruence class of $i$ modulo $p$.

A rational homology sphere $Y$ is an {\it L-space} if $\mathrm{rank}\widehat{HF}(Y)=|H_1(Y)|$. Examples of L-spaces include spherical space forms. The information about the Heegaard Floer homology of an L-space is completely encoded in its correction terms.

Let $L(p,q)$ be the lens space obtained by $\frac{p}q$--surgery on the unknot. 
The correction terms for lens spaces can be computed inductively as in \cite{OSzAbGr}:
\begin{eqnarray}
d(S^3,0)&=&0,\nonumber\\
d(L(p,q),i)&=&-\frac14+\frac{(2i+1-p-q)^2}{4pq}-d(L(q,r),j),\label{eq:CorrRecurs}
\end{eqnarray}
where $0\le i<p+q$,
$r$ and $j$ are the reductions modulo $p$ of $q$ and $i$, respectively.

For example, using the recursion formula (\ref{eq:CorrRecurs}), we can compute
\begin{eqnarray}
d(L(p,1),i)&=&-\frac14+\frac{(2i-p)^2}{4p}\nonumber\\
d(L(p,2),i)&=&-\frac14+\frac{(2i-p-1)^2}{8p}-d(L(2,1),j)\nonumber\\
&=&\frac{(2i-p-1)^2}{8p}-\frac{1+(-1)^i}4\label{eq:CorrLp2}\\
d(L(3,q),i)&=&\left\{
\begin{array}{ll}
(\frac12,-\frac16,-\frac16),&q=1,i=0,1,2\\
(\frac16,\frac16,-\frac12),&q=2,i=0,1,2
\end{array}
\right.\label{eq:CorrL3q}\\
d(L(5,q),i)&=&\left\{
\begin{array}{ll}
(1,\frac15,-\frac15,-\frac15,\frac15),&q=1, i=0,1,2,3,4\\
(\frac25,\frac25,-\frac25,0,-\frac25),&q=2, i=0,1,2,3,4\\
(\frac25,0,\frac25,-\frac25,-\frac25),&q=3, i=0,1,2,3,4\\
(-\frac15,\frac15,\frac15,-\frac15,-1),&q=4, i=0,1,2,3,4
\end{array}
\right.\label{eq:CorrL5q}
\end{eqnarray}

Given a null-homologous knot $K\subset Y$, Ozsv\'ath--Szab\'o \cite{OSzKnot} and Rasmussen \cite{Ras} defined the knot Floer homology. The basic philosophy is, if we know all the information about the 
knot Floer homology, then we can compute the Heegaard Floer homology of all the surgeries on $K$. In particular,
if  the $\frac pq$--surgery on $K\subset S^3$ is an L-space surgery, where $p,q>0$, then the correction terms of $S^3_{K}(p/q)$ can be computed from the Alexander polynomial $\Delta_K(T)$ of $K$ as follows.
 
Suppose 
$$\Delta_K(T)=\sum_ia_it^i.$$
Define a sequence of integers
$$t_i=\sum_{j=1}^{\infty}ja_{i+j},\quad i\ge0.$$
then $a_i$ can be recovered from $t_i$ by
\begin{equation}\label{eq:a_i}
a_i=t_{i-1}-2t_i+t_{i+1},\quad\text{for }i>0.
\end{equation}

If $K$ admits an L-space surgery, then one can prove \cite{OSzRatSurg,Ras}
\begin{equation}\label{eq:tsProperties}
t_s\ge0,\quad t_s\ge t_{s+1}\ge t_s-1,\quad t_{g(K)}=0.
\end{equation}
Moreover, the following proposition holds.

\begin{prop}\label{prop:Corr}
Suppose the $\frac pq$--surgery on $K\subset S^3$ is an L-space surgery, where $p,q>0$. Then for any $0\le i\le p-1$ we have
$$d(S^3_{K}(p/q),i)=d(L(p,q),i)-2\max\{t_{\lfloor\frac{i}q\rfloor},t_{\lfloor\frac{p+q-1-i}q\rfloor}\}.$$
\end{prop}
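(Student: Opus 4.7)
The plan is to derive this from the Ozsv\'ath--Szab\'o rational surgery formula \cite{OSzRatSurg}, specialized to an L-space surgery. For coprime $p,q>0$, $HF^+(S^3_{K}(p/q),i)$ is the homology of a mapping cone
\[
\mathbb{X}^+_i \;=\; \bigoplus_{s\in\mathbb Z} A^+_s \;\xrightarrow{\;D^+_{i,p/q}\;}\; \bigoplus_{s\in\mathbb Z} B^+_s,
\]
where $A^+_s,B^+_s$ are canonical subquotient complexes of $CFK^\infty(K)$, the differential $D^+_{i,p/q}$ is assembled from the maps $v^+_s\co A^+_s\to B^+_s$ and $h^+_s\co A^+_s\to B^+_{s-1}$, and the pairing of indices $s$ with the Spin$^c$ class $\sigma(i)$ is prescribed combinatorially by $p,q,i$. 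Each $B^+_s$ is identified with the tower $\mathcal T^+$ sitting in the standard absolute grading.

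Next I would specialize to L-space knots. Since $K$ admits a positive L-space surgery, the Ozsv\'ath--Szab\'o structure theorem realizes $CFK^\infty(K)$ as a staircase determined by $\Delta_K$, under which each $A^+_s\cong\mathcal T^+$ as a graded $\mathbb F[U]$-module with top generator at absolute grading $-2t_{|s|}$ (using the symmetry $\Delta_K(T)=\Delta_K(T^{-1})$ to set $t_{-s}:=t_s$). Applied instead to the unknot $U$, the same formula has $t_s\equiv 0$ and computes $HF^+(L(p,q),i)$, whose lowest surviving grading is $d(L(p,q),i)$, since $S^3_U(p/q)=L(p,q)$. The mapping cones for $K$ and for $U$ are therefore isomorphic as maps of $\mathbb F[U]$-modules except that every $A^+_s$ for $K$ is shifted downward in absolute grading by $2t_{|s|}$.

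Finally, because $S^3_{K}(p/q)$ is an L-space, the homology of $\mathbb{X}^+_i$ is a single copy of $\mathcal T^+$, so only one $A^+_s$ survives to carry its bottom generator after cancellation against neighboring $B^+$'s via $v^+$ and $h^+$. A direct inspection of the combinatorics in \cite{OSzRatSurg} identifies the two candidate ``boundary'' indices---those at either end of the cancellation chain through Spin$^c$ class $i$---as $s=\lfloor i/q\rfloor$ and $s=\lfloor(p+q-1-i)/q\rfloor$, both nonnegative for $0\le i\le p-1$. By the monotonicity $t_{s+1}\le t_s$ in \eqref{eq:tsProperties}, the larger of $t_{\lfloor i/q\rfloor}$ and $t_{\lfloor(p+q-1-i)/q\rfloor}$ dominates the downward shift of the surviving tower, yielding exactly the claimed formula. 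The main obstacle is this last combinatorial step: matching the abstract index $s$ to the identification $\sigma\co\mathbb Z/p\mathbb Z\to\spin(S^3_K(p/q))$ from \cite{OSzAbGr,OSzRatSurg} and confirming that these two are precisely the boundary indices; the rest is formal once the structure theorem for L-space knots and the unknot comparison are in place.
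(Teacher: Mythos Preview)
The paper does not actually prove this proposition; immediately after the statement it simply says ``This formula is contained in Ozsv\'ath--Szab\'o \cite{OSzRatSurg} and Rasmussen \cite{Ras}. More general version of this formula can be found in Ni--Wu \cite{NiWu}.'' Your sketch is precisely the argument carried out in those references (especially \cite{NiWu}), so in that sense you and the paper are aligned.

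One small inaccuracy worth flagging: your description of the mapping cone as a single chain with two ``boundary'' $A^+$ complexes is not quite how the combinatorics work. In the rational surgery formula the Spin$^c$ structure $i$ picks out a single relevant index $s_0=\lfloor i/q\rfloor$, and the two competing shifts come from the two maps $v^+_{s_0}$ and $h^+_{s_0}$ out of that one $A^+_{s_0}$, measured by the invariants usually called $V_{s_0}$ and $H_{s_0}$. The conjugation symmetry $H_{s}=V_{-s}$ (equivalently $t_{-s}=t_s$), together with the identification $J(i)\leftrightarrow p+q-1-i$ (cf.\ Lemma~\ref{lem:J}), is what converts $H_{\lfloor i/q\rfloor}$ into $t_{\lfloor(p+q-1-i)/q\rfloor}$. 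So the second index arises from symmetry, not from a second ``end'' of a cancellation chain. This does not affect the correctness of the final formula, and you correctly flagged this bookkeeping as the main thing to check.
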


This formula is contained in Ozsv\'ath--Szab\'o \cite{OSzRatSurg} and Rasmussen \cite{Ras}. More general version of this formula can be found in Ni--Wu \cite{NiWu}.

\begin{lem}\label{lem:J}
Suppose $i$ is an integer satisfying $0\le i<p+q$, then $J(\sigma([i]))$ is represented by $p+q-1-i$. 
\end{lem}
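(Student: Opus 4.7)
The plan is to unwind Ozsv\'ath--Szab\'o's definition of the parametrization $\sigma$ and use the fact that conjugation of Spin$^c$ structures reverses the first Chern class. Write $Y = Y_K(p/q)$ and cap off a Seifert surface for $K$ by $q$ copies of a meridional disk of the filling solid torus; this yields a rational class $[\hat F]$ which, up to the torsion subgroup, is a generator of $H_2(Y;\mathbb{Q})$. In \cite{OSzAbGr,OSzRatSurg}, the labeling $\sigma$ is pinned down by the rule
\[
\langle c_1(\sigma([i])),[\hat F]\rangle \equiv 2i+1-p-q \pmod{2p}.
\]
This is precisely the expression whose square shows up in the correction-term recursion (\ref{eq:CorrRecurs}), which is how the normalization is fixed throughout the paper.

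With this labeling in hand, the lemma reduces to a one-line identity. Since $c_1(J\mathfrak{s}) = -c_1(\mathfrak{s})$, the Spin$^c$ structure $J(\sigma([i]))$ satisfies
\[
\langle c_1(J(\sigma([i]))),[\hat F]\rangle \equiv -(2i+1-p-q) = 2(p+q-1-i)+1-p-q \pmod{2p},
\]
so $J(\sigma([i]))$ and $\sigma([p+q-1-i])$ agree on the evaluation of $c_1$ against $[\hat F]$. Because the residue modulo $2p$ of this evaluation determines the Spin$^c$ structure under the parametrization $\sigma$, we conclude $J(\sigma([i])) = \sigma([p+q-1-i])$.

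The main obstacle is just keeping the conventions straight: Ozsv\'ath--Szab\'o define $\sigma$ via relative Spin$^c$ structures on the knot exterior together with a specified extension across the $(p,q)$-filling, and various sources state the formula with differing sign conventions. A useful sanity check is that the numerator $2i+1-p-q$ in (\ref{eq:CorrRecurs}) changes only by a sign under $i \mapsto p+q-1-i$, which is exactly the reason that the correction-term symmetry $d(Y,\mathfrak{s}) = d(Y,J\mathfrak{s})$ from (\ref{eq:dSymm}) is compatible with the computation here. Once the normalization is locked down, no further computation is required.
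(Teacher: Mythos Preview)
Your approach has the right idea and is morally the same as the paper's: the labeling $\sigma$ is characterized by a $c_1$-evaluation formula, conjugation negates $c_1$, and the identity $-(2i+1-p-q)=2(p+q-1-i)+1-p-q$ does the rest. The paper carries this out by passing to the two-handle cobordism $X$ from $-L(q,r)$ to $-L(p,q)$, verifying that $H^2(X)\cong\mathbb Z$, and using that $\langle c_1(\mathfrak s_z(\psi_i)),H\rangle=2i+1-p-q$ against a generator $H\in H_2(X)$; since $H^2(X)$ is torsion-free, equality of $c_1$ forces equality of Spin$^c$ structures on $X$ and hence of their restrictions to the boundary.

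The gap in your version is where the evaluation lives. The surgered manifold $Y=Y_K(p/q)$ is a rational homology sphere, so $H_2(Y;\mathbb Q)=0$ and your class $[\hat F]$ vanishes; the pairing $\langle c_1(\sigma([i])),[\hat F]\rangle$ is identically zero and cannot encode the congruence you wrote down. Even reinterpreting the argument via $c_1(\mathfrak s)\in H^2(Y;\mathbb Z)\cong\mathbb Z/p\mathbb Z$ directly, when $p$ is even the map $\mathfrak s\mapsto c_1(\mathfrak s)$ is two-to-one (the two spin structures share the same $c_1$), so ``same $c_1$'' does not imply ``same Spin$^c$ structure''. What is needed is exactly the step the paper supplies: lift to a setting---the cobordism $X$, or equivalently relative Spin$^c$ structures on the knot exterior paired with the Seifert-surface class in $H_2$ of the exterior rel boundary---where the evaluation is an honest integer and $c_1$ is injective on Spin$^c$ structures. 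Once that is in place, your one-line computation is precisely the paper's.
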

\begin{proof}
We only need to examine our result for surgeries on the unknot, as this is a homological statement. 

As in the proof of \cite[Proposition~4.8]{OSzAbGr}, there is a two-handle addition cobordism $X$ from $-L(q,r)$ to $-L(p,q)$, 
where $r$ is the reduction of $p$ modulo $q$.
Let $i\in\{0,1,\dots,p+q-1\}$ be a number. Let $r$ and $j$ be the reduction of $p$ and $i$ modulo $q$. The proof of \cite[Proposition~4.8]{OSzAbGr} shows that
there is a Spin$^c$ structure $\mathfrak s_z(\psi_i)$ such that its restriction on $-L(p,q)$ is represented by $i$ and its restriction on $-L(q,r)$ is represented by $j$. Moreover, 
$$\langle c_1(\mathfrak s_z(\psi_i)),H\rangle=2i+1-p-q$$
for a generator $H$ of $H_2(X)$.

Now we choose $i_1,i_2\in\{0,1,\dots,p+q-1\}$ such that $i_1+i_2=p+q-1$, and let $j_1,j_2$ be the reductions of $i_1,i_2$ modulo $q$. 
We have 
\begin{equation}\label{eq:c1neg}
\langle c_1(\mathfrak s_z(\psi_{i_1})),H\rangle=-\langle c_1(\mathfrak s_z(\psi_{i_2})),H\rangle.
\end{equation}

We claim that $$H^2(X)\cong H_2(X,\partial X)\cong \mathbb Z.$$
In fact, let $Y_1=Y_3=-L(q,r)$, $Y_2=-L(p,q)$. We have exact sequences:
$$0\to H_2(X,Y_i)\to H_2(X,\partial X)\to H_1(Y_{i+1})\to H_1(X,Y_i),\quad i=1,2,$$
Since $X$ is a $2$--handle cobordism, $$H_2(X,Y_1)\cong H_2(X,Y_2)\cong \mathbb Z,\quad H_1(X,Y_1)\cong H_1(X,Y_2)\cong 0.$$ So the above exact sequences become 
$$0\to\mathbb Z\to H_2(X,\partial X)\to\mathbb Z/p\mathbb Z\to0,$$
$$0\to\mathbb Z\to H_2(X,\partial X)\to\mathbb Z/q\mathbb Z\to0.$$
As $\gcd(p,q)=1$, it is easy to see $H_2(X,\partial X)\cong\mathbb Z$. This finishes the proof of the claim.

It follows from the claim and (\ref{eq:c1neg}) that $\psi_{i_2}=J\psi_{i_1}$. Hence $i_2$ represents $J(\sigma([i_1]))$.
\end{proof}

\begin{cor}\label{cor:Center}
If $p$ is odd, then $J\co \mathbb Z/p\mathbb Z\to\mathbb Z/p\mathbb Z$ has a unique fixed point:
$$C(p,q)=\left\{
\begin{array}{ll}
\displaystyle\frac{p+q-1}2, &\text{if $q$ is even},\\
&\\
\displaystyle\frac{q-1}2, &\text{if $q$ is odd}.
\end{array}
\right.$$ 
\end{cor}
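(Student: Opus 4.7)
The plan is to reduce the fixed-point question directly to an elementary congruence using Lemma~\ref{lem:J}, and then split into the two parity cases to produce an integer representative.

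First, I would rephrase the problem: we are looking for elements $[i]\in\mathbb Z/p\mathbb Z$ such that $J(\sigma([i]))=\sigma([i])$. By Lemma~\ref{lem:J}, if $i\in\{0,1,\dots,p+q-1\}$ is a representative, then $J(\sigma([i]))$ is represented by $p+q-1-i$. So the fixed-point condition translates to
\[
p+q-1-i\equiv i\pmod p,\qquad\text{i.e.,}\qquad 2i\equiv q-1\pmod p.
\]

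Next, since $p$ is odd, $\gcd(2,p)=1$, so multiplication by $2$ is a bijection on $\mathbb Z/p\mathbb Z$. This immediately gives existence and uniqueness of a fixed point, so the only remaining task is to identify the representative in a clean integer form.

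Finally, I would split by the parity of $q$. If $q$ is odd, then $q-1$ is even, so $i=(q-1)/2$ is a bona fide integer satisfying $2i=q-1\equiv q-1\pmod p$, so $C(p,q)=(q-1)/2$ works. If $q$ is even, then $q-1$ is odd, so $(q-1)/2$ is not an integer, but we can use the freedom to shift the right-hand side by $p$: since $p$ is odd, $p+q-1$ is even, and $i=(p+q-1)/2$ is an integer with $2i=p+q-1\equiv q-1\pmod p$. This exhibits the promised formula for $C(p,q)$ in both cases.

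There is essentially no obstacle here; the only mild subtlety is ensuring that the formula given in the statement is an integer in each case, which is why the parity split on $q$ is necessary. Uniqueness is automatic from the invertibility of $2$ modulo the odd integer $p$.
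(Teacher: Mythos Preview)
Your proof is correct and is exactly the intended argument: the paper states this as an immediate corollary of Lemma~\ref{lem:J} without further proof, and your reduction to the congruence $2i\equiv q-1\pmod p$ together with the invertibility of $2$ modulo the odd integer $p$ is precisely how the corollary follows from that lemma. There is nothing to add.
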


%%%%%
%%%%%
%%%%%
%%%%%
%%%%%

\section{The strategy of our proof}\label{sect:Strat}

By the Geometrization theorem \cite{Pe1,Pe2,Pe3}, if a $3$--manifold has a finite fundamental group, then the manifold must be a spherical space form. Besides $S^3$, there are five types of spherical space forms: {\bf C, D, T, O, I}.
The {\bf C}-type  manifolds are the lens spaces with cyclic fundamental groups; the {\bf D}-type  manifolds are Seifert fibered spaces over the orbifold $S^2(2,2,n)$ with dihedral type fundamental groups; the {\bf T}-type  manifolds are Seifert fibered spaces over the orbifold $S^2(2,3,3)$ with tetrahedral type fundamental groups; the {\bf O}-type  manifolds are Seifert fibered spaces over the orbifold $S^2(2,3,4)$ with octahedral type fundamental groups; the {\bf I}-type  manifolds are Seifert fibered spaces over the orbifold $S^2(2,3,5)$ with icosahedral type fundamental groups. 

It follows from the Cyclic Surgery Theorem \cite{CGLS} that {\bf C}-type manifolds cannot be obtained by half-integral surgeries on hyperbolic knots. The {\bf D}- and $\bf O$-type manifolds have even order $H_1$, so they cannot be obtained from half-integral surgery. We only need to consider {\bf T}- and {\bf I}-type manifolds.

\begin{lem}\label{lem:SFS23}
Any {\bf T}-type  manifold is homeomorphic to $\pm\mathbb T(\frac{6n\pm3}n)$ for some positive integer $n$ with $\gcd(n,3)=1$. Any {\bf I}-type  manifold is homeomorphic to $\pm \mathbb T(\frac{6n\pm5}n)$ for some positive integer $n$ with $\gcd(n,5)=1$. 
\end{lem}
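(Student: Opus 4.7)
The plan is to exploit the Seifert fibered structure on the trefoil exterior and invoke Moser's classification of Dehn surgeries on torus knots. First I would recall that $\mathbb T$, the exterior of $T(3,2)$, is Seifert fibered over the disk orbifold $D^2(2,3)$, and that with respect to the standard meridian-longitude basis on $\partial\mathbb T$ the regular Seifert fiber represents the slope $6=2\cdot 3$. Consequently, for any $p/q$ in lowest terms with $p\neq 6q$, the Dehn filling $\mathbb T(p/q)$ is Seifert fibered over the orbifold $S^2(2,3,|p-6q|)$, with the third fiber non-exceptional (yielding a lens space) exactly when $|p-6q|=1$.

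Second I would match base orbifolds. By definition, \textbf{T}-type spherical space forms are Seifert fibered over $S^2(2,3,3)$ and \textbf{I}-type ones over $S^2(2,3,5)$. So to be of the form $\mathbb T(p/q)$ one needs $|p-6q|=3$ or $|p-6q|=5$, respectively; writing $n=q>0$ this gives $p=6n\pm 3$ in the \textbf{T}-type case and $p=6n\pm 5$ in the \textbf{I}-type case. The condition $\gcd(p,q)=1$ simplifies via $\gcd(6n\pm 3,n)=\gcd(\pm 3,n)$ to $\gcd(n,3)=1$, and analogously to $\gcd(n,5)=1$, producing the coprimality constraints in the statement. This gives one containment: every such filling is of \textbf{T}- or \textbf{I}-type.

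For the reverse containment, which I expect to be the main obstacle, I would use the classification of closed orientable Seifert fibered spaces over $S^2(2,3,3)$ and $S^2(2,3,5)$ with finite fundamental group: each such family is parameterized (up to orientation) by the rational Euler number $e$, with $|H_1|=18|e|$ or $30|e|$ respectively, and with $-M$ corresponding to $-e$. I would then compute the Seifert invariants of $\mathbb T(\frac{6n\pm 3}{n})$ and $\mathbb T(\frac{6n\pm 5}{n})$ using Moser's formulas, extract their Euler numbers, and check that as $n$ varies over the admissible positive integers and the inner sign varies, every admissible positive value of $|e|$ is attained. The outer $\pm$ in front of $\mathbb T$, which corresponds to replacing the right-handed trefoil by the left-handed one, then absorbs the sign of $e$. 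The difficulty in this last step is purely bookkeeping: normalizing Seifert invariants so that surjectivity is manifest, and verifying that no \textbf{T}- or \textbf{I}-type manifold is missed and none is double-counted.
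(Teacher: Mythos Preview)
Your outline is correct, and the forward containment (that $\mathbb T(\tfrac{6n\pm3}{n})$ and $\mathbb T(\tfrac{6n\pm5}{n})$ are \textbf{T}- and \textbf{I}-type, together with the coprimality conditions) matches the paper's argument almost verbatim. The difference is in the reverse containment. You propose to parametrize \textbf{T}- and \textbf{I}-type manifolds by their Euler number and then verify, by explicit computation of the Seifert invariants of the fillings, that every admissible value of $e$ is hit; you correctly anticipate this as the main bookkeeping burden. The paper bypasses this entirely with a one-line topological observation: given a \textbf{T}-type manifold $Y$, remove a neighborhood of a multiplicity~$3$ singular fiber to obtain a Seifert fibered space over $D^2(2,3)$; by the classification of Seifert fibered spaces there is only one such manifold up to orientation reversal, namely $\mathbb T$, so $Y$ or $-Y$ is a Dehn filling of $\mathbb T$. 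The same works for \textbf{I}-type by removing the multiplicity~$5$ fiber. This argument is shorter and avoids any Euler-number computation or surjectivity check; your route, while workable, trades a structural fact (uniqueness of the $D^2(2,3)$ Seifert piece) for an enumeration that is unnecessary here.
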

\begin{proof}
Suppose $Y$ is a {\bf T}-type manifold, then it is Seifert fibered over the base orbifold $S^2(2,3,3)$. Removing the neighborhood of a multiplicity $3$ singular fiber, we get a Seifert fibered space over the orbifold $D^2(2,3)$. The classification of Seifert fibered spaces tells us that there is only one such manifold up to orientation reversal, which is the trefoil complement $\mathbb T$. So $Y$ or $-Y$ can be obtained by Dehn filling on $\mathbb T$. The same argument works for {\bf I}-type manifolds.

Now we consider the problem when we get Seifert fibered spaces with base orbifold $S^2(2,3,3)$ and $S^2(2,3,5)$ by Dehn filling on $\mathbb T$.
The regular fiber on $\partial \mathbb T$ has slope $6$, so $\frac pq$--filling will create a multiplicity $\Delta(\frac pq,6)$ fiber. To get a Seifert fibered space with base orbifold $S^2(2,3,3)$ or $S^2(2,3,5)$, we need to have $\Delta(\frac pq,6)=3$ or $5$. So $\frac pq=\frac{6n\pm3}n$ or $\frac{6n\pm5}n$ for some $n>0$. 
\end{proof}

%It is known that $\mathbb T(\frac{18k+9}{3k+1})\cong-\mathbb T(\frac{18k+9}{3k+2})$ for any nonnegative integer $k$ \cite{Mathieu}.

Let $p,q>0$ be coprime integers. Using Proposition~\ref{prop:Corr}, we get
\begin{equation}\label{eq:TrSurgCorr}
d(\mathbb T(p/q),i)=d(L(p,q),i)-2\chi_{[0,q)}(i),
\end{equation}
where
$$
\chi_{[0,q)}(i)=\left\{
\begin{array}{ll}
1, &\text{when }0\le i<q,\\
&\\
0, &\text{when }q\le i<p.
\end{array}
\right.
$$

Suppose $S^3_{K}(p/2)$ is a spherical manifold, then by (\ref{eq:CorrLp2}) and Proposition~\ref{prop:Corr}
\begin{eqnarray*}
d(S^3_{K}(p/2),i)&=&d(L(p,2),i)-2\max\{t_{\lfloor\frac{i}2\rfloor},t_{\lfloor\frac{p+1-i}2\rfloor}\}\\ 
&=&-\frac{1+(-1)^i}4+\frac{(2i-p-1)^2}{8p}-2t_{\min\{\lfloor\frac{i}2\rfloor,\lfloor\frac{p+1-i}2\rfloor\}}.
\end{eqnarray*}

If $S^3_{K}(p/2)\cong\varepsilon\mathbb T(p/q)$ for $\varepsilon\in\{-1,1\}$, where ``$\cong$'' stands for orientation preserving homeomorphism, then the two sets $$\{d(S^3_K(p/2),i)|\:i\in\mathbb Z/p\mathbb Z\}, \quad\{d(\varepsilon\mathbb T(p/q),i)|\:i\in\mathbb Z/p\mathbb Z\}$$
are necessarily equal. However, the two parametrizations of $\spin$ may not be equal: they could differ by an affine isomorphism of $\mathbb Z/p\mathbb Z$. More precisely, there exists an affine isomorphism $\phi\co\mathbb Z/p\mathbb Z\to\mathbb Z/p\mathbb Z$, such that 
$$d(S^3_{K}(p/2),i)=d(\varepsilon\mathbb T(p/q),\phi(i)).$$
This map $\phi$ commutes with $J$, so it follows from Corollary~\ref{cor:Center} that $\phi(C(p,2))=C(p,q)$.
For any integer $a$,
define $\phi_a\co\mathbb Z/p\mathbb Z\to\mathbb Z/p\mathbb Z$ by
\begin{equation}\label{eq:phi}
\phi_a(i)=a\big(i-C(p,2)\big)+C(p,q).
\end{equation}

By (\ref{eq:dSymm}) and Lemma~\ref{lem:J}, $d(\mathbb T(p/q),\phi_a(i))=d(\mathbb T(p/q),\phi_{p-a}(i))$.
So we may assume 
$$d(S^3_{K}(p/2),i)=\varepsilon d(\mathbb T(p/q),\phi_a(i)),\quad \text{for any }i\in\mathbb Z/p\mathbb Z,$$ 
and for some $a$ satisfying
\begin{equation}\label{eq:aCond}
0<a<\frac p2,\quad\gcd(p,a)=1.
\end{equation}

Let
\begin{equation}\label{eq:delta_a}
\delta^{\varepsilon}_a(i)=d(L(p,2),i)-\varepsilon d(\mathbb T(p/q),\phi_a(i)).
\end{equation}
By Proposition~\ref{prop:Corr}, we should have 
\begin{equation}\label{eq:Expected}
\delta^{\varepsilon}_a(i)=2t_{\text{min}\{\lfloor \frac{i}{2} \rfloor,\lfloor \frac{p+1-i}{2} \rfloor \}}
\end{equation}
 if $S^3_{K}(p/2)\cong \varepsilon\mathbb T(p/q)$ and $\phi_a$ (or $\phi_{p-a}$) identifies their Spin$^c$ structures. 

\begin{proof}[Proof of Theorem~\ref{thm:TenCases}] 
We will compute the correction terms of the {\bf T}- and {\bf I}-type manifolds using (\ref{eq:TrSurgCorr}). For all $a$ satisfying (\ref{eq:aCond}), we compute the sequence $\delta^{\varepsilon}_a(i)$. Then we check if this sequence satisfies (\ref{eq:Expected}) for any $\{t_s\}$ as in (\ref{eq:tsProperties}). By Proposition~\ref{prop:pLarge}, the equality (\ref{eq:Expected}) does not hold when $p$ is sufficiently large. 
 
For small $p$, a direct computer search reveals all the possible $p/q$, which are exactly $7/2$ and the numbers given in the table in Theorem~\ref{thm:TenCases}. We also get the correction terms, from which we can recover the Alexander polynomials using (\ref{eq:Expected}) and (\ref{eq:a_i}).
By \cite{OSzLens}, we can get the knot Floer homology of the corresponding knots, which should be the knot Floer homology of either $T_{3,2}$, or $T_{5,2}$, or their cable knots as in the table in Theorem~\ref{thm:TenCases}. By Ghiggini \cite{Gh}, if the knot Floer homology is the same as that of $T_{3,2}$, then the knot must be $T_{3,2}$. So we are left with the knots corresponding to the knots in the table in Theorem~\ref{thm:TenCases}.
\end{proof}

%%%%%
%%%%%
%%%%%
%%%%%
%%%%%

\section{The case when $p$ is large}\label{sect:pLarge}

In this section, we will assume that
$S^3_{K}(p/2)\cong \varepsilon\mathbb T(p/q)$, and $$p=6q+\zeta r,\quad r\in\{3,5\}, \quad\varepsilon,\zeta\in\{1,-1\}.$$ 
We will prove that this does not happen when $p$ is sufficiently large. More precisely, we will show:

\begin{prop}\label{prop:pLarge}
If $p\ge192r(36r+2)^2$, then $S^3_{K}(p/2)\not\cong \varepsilon\mathbb T(p/q)$, where $p=6q+\zeta r$, $r\in\{3,5\}$.
\end{prop}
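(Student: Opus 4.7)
The plan is to extract structural constraints from (\ref{eq:Expected}), convert them into Diophantine conditions on $a$, and show they cannot be jointly satisfied when $p\ge 192r(36r+2)^2$. Since $p$ is odd, the index $s(i)=\min\{\lfloor i/2\rfloor,\lfloor (p+1-i)/2\rfloor\}$ satisfies $s(2k)=s(2k+1)$ for $0\le k\le(p-3)/4$, so (\ref{eq:Expected}) forces the \emph{plateau identity} $\delta_a^\varepsilon(2k+1)=\delta_a^\varepsilon(2k)$; similarly (\ref{eq:tsProperties}) yields the \emph{step constraint} $\delta_a^\varepsilon(2k+2)-\delta_a^\varepsilon(2k)\in\{0,-2\}$ and the non-negativity $\delta_a^\varepsilon(i)\ge 0$.

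To extract these, I would first put $\delta_a^\varepsilon$ into closed form. Unwinding (\ref{eq:CorrRecurs}) once and using (\ref{eq:TrSurgCorr}),
\[
d(\mathbb T(p/q),j)=-\tfrac14+\tfrac{(2j+1-p-q)^2}{4pq}-d(L(q,r'),j\bmod q)-2\chi_{[0,q)}(j),
\]
with $r'=\zeta r\bmod q\in\{r,q-r\}$. Inserting this and (\ref{eq:CorrLp2}) into (\ref{eq:delta_a}) splits $\delta_a^\varepsilon(i)$ into a smooth quadratic piece in $i$ and $\phi_a(i)$ (with denominators $8p$ and $4pq$) plus a discrete piece built from $-(-1)^i/4$, $\varepsilon\,d(L(q,r'),\phi_a(i)\bmod q)$, and $2\varepsilon\chi_{[0,q)}(\phi_a(i))$. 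One more application of (\ref{eq:CorrRecurs}) reduces $L(q,r')$ to $L(r',q\bmod r')$, and the explicit lists (\ref{eq:CorrL3q})--(\ref{eq:CorrL5q}) then yield a uniform bound $|\text{discrete piece}|\le B(r)$ depending only on $r\in\{3,5\}$.

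Writing $j_k=\phi_a(2k)\in[0,p)$, the plateau identity becomes
\[
\tfrac{2k}{p}=\varepsilon\cdot\tfrac{a(2j_k+1-p-q)+a^2}{pq}+E_1(k),\qquad |E_1|\le 2B(r),
\]
which on any maximal interval $I$ of $k$'s free of wraps $j_k+2a\ge p$ becomes a linear-in-$k$ identity (with slope $\tfrac{2q-4\varepsilon a^2}{pq}$) plus a bounded step function. Slope-matching forces $|2q-4\varepsilon a^2|\cdot|I|\le C_1(r)\cdot pq$; a Dirichlet-type argument on the orbit $\{2ak\bmod p\}$ yields $|I|\ge\lfloor p/(2a)\rfloor$, hence $|a^2-\varepsilon q/2|\le C_1(r)\cdot a$, pinning $a$ to a window of radius $O(r)$ around $\sqrt{\varepsilon q/2}$. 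A second, independent inequality comes from examining the \emph{constant} part of the step constraint equation on runs where the target value in $\{0,-2\}$ is locally constant, combined with the non-negativity $\delta_a^\varepsilon(i)\ge 0$ evaluated near $i=(p-1)/2$ (where the smooth quadratic piece attains its minimum and admits an explicit lower bound): this forces a second window on $a$ whose center differs from the first by $\Omega(1)$, so the two windows are disjoint once $p\ge 192r(36r+2)^2$, completing the proof.

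The main obstacle is the precise bookkeeping of wraps, both mod-$p$ in $j_k$ itself (which produce $O(p/q)$-sized corrections to the smooth piece) and mod-$q$ in $j_k\bmod q$ (which toggle the value of the lens-space summand in the discrete piece); the factor $(36r+2)^2$ in the threshold comes exactly from the product of the constant $B(r)$ and the reciprocal square of the minimum wrap-free run length, which requires a delicate simultaneous Diophantine analysis of the residues of $2a$ modulo $p$ and modulo $q$ for every admissible $a\in(0,p/2)$ with $\gcd(p,a)=1$.
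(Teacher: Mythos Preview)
Your proposal has a genuine gap in the decomposition of $\delta_a^\varepsilon$. You write that one more application of the recursion (\ref{eq:CorrRecurs}) reduces $d(L(q,r'),\cdot)$ to $d(L(r',q\bmod r'),\cdot)$ and that the explicit lists then give $|\text{discrete piece}|\le B(r)$. But that second application produces
\[
d(L(q,r'),j')=-\tfrac14+\frac{(2j'+1-q-r')^2}{4qr'}-d(L(r',q\bmod r'),j'\bmod r'),
\]
and the middle term is of size up to $q/(4r')$, not $O(1)$. It is a quadratic in $j'=\phi_a(i)\bmod q$, so it does not sit inside your ``smooth quadratic piece in $i$ and $\phi_a(i)$'' either. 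Consequently your displayed plateau identity with $|E_1(k)|\le 2B(r)$ is false: $E_1$ hides a term of order $a/r$ (from differencing the middle term), and the slope you extract, $(2q-4\varepsilon a^2)/(pq)$, omits the second-recursion contribution entirely. Once that contribution is tracked, the slope-matching does \emph{not} force $|a^2-\varepsilon q/2|\le C_1(r)\,a$; in fact the correct constraint (proved in the paper as Lemma~\ref{lem:aEstimate}) is $|a-mp/6|<\sqrt{4rp/3}$ for some $m\in\{0,1,2,3\}$, which is incompatible with $a\approx\sqrt{q/2}$ for large $p$. Your ``second window'' step is then built on a nonexistent first window, and is in any case only asserted, not argued.

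The paper's proof avoids this by unwinding the recursion \emph{twice} explicitly and working near the centre $i=\frac{p+1}{2}$ with step $6$ rather than at the edge with step $2$. Lemma~\ref{lem:Ak+B+C} shows that $\delta_a^\varepsilon(\tfrac{p+1}{2}+6k+1)-\delta_a^\varepsilon(\tfrac{p+1}{2}+6k)=Ak+B+C_k$ where $A,B$ are explicit constants and $C_k$ takes at most $3r$ values (now genuinely bounded, being built from $d(L(r,s),\cdot)$ and $\chi_{[0,q)}$). The constraint (\ref{eq:Expected}) forces this to lie in $\{0,2\}$ for $k=0,\dots,6r$; if $A\ne0$ this is a contradiction by counting, and the residual case $A=0$ is dispatched by a short direct computation. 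The specific threshold $192r(36r+2)^2$ arises from making $k=6r$ lie in the range (\ref{eq:kRange}), not from any Diophantine run-length analysis.
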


\begin{rem}
As the reader may find, the bound $192r(36r+2)^2$ can be greatly decreased by carefully improving our estimate. The first author has carried out a case-by-case analysis, which shows that $p$ cannot be greater than $6,000$. Our computer search is based on this more practical bound, rather than Proposition~\ref{prop:pLarge}.
\end{rem}

Let $s\in\{0,1,\dots,r-1\}$ be the reduction of $q$ modulo $r$. For any integer $n$, let $\theta(n)\in\{0,1\}$ be the reduction of $n$ modulo $2$, and let $\bar{\theta}(n)=1-\theta(n)$.

The equation (\ref{eq:phi}) becomes
$$%\begin{equation}\label{eq:phi1}
\phi_a(i)=a(i-\frac{p+1}2)+\frac{\bar{\theta}(q)p+q-1}2.
$$%\end{equation}
Using (\ref{eq:CorrLp2}), (\ref{eq:TrSurgCorr}) and (\ref{eq:delta_a}), we get
\begin{eqnarray}
\delta^{\varepsilon}_a(i)&=&d(L(p,2),i)-\varepsilon d(\mathbb T(p/q),\phi_a(i))\nonumber\\
&=&\frac{(2i-p-1)^2}{8p}-\frac{\bar{\theta}(i)}2-\varepsilon d(L(p,q),\phi_a(i))+2\varepsilon\chi_{[0,q)}(\phi_a(i)).\label{eq:delta^eps}
\end{eqnarray}

\begin{lem}\label{lem:aEstimate}
Assume that
$S^3_{K}(p/2)\cong \varepsilon\mathbb T(p/q)$.
Let $m\in \{0,1,2,3\}$ satisfy that 
$$0\le a-mq+\frac{\bar{\theta}(q)\zeta r+q-1}2<q,$$
then
$$\left|a-\frac{mp}6\right|<\sqrt{\frac{4rp}3}.$$
\end{lem}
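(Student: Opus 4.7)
The plan is to apply the identity~(\ref{eq:Expected}) at a single test index $i^*=(p+1)/2+\eta$, where $\eta$ is a small integer (of absolute value at most $3$) chosen according to $m$, the sign $\zeta$, and the parity of $q$. The hypothesis on $m$ is designed precisely so that, for this $\eta$, the Spin$^c$ structure $\phi_a(i^*)$ lies in the correct congruence class---either inside $[0,q)$, activating the indicator $\chi_{[0,q)}$, or with $\phi_a(i^*)\bmod q$ confined to a controlled range. Because $\eta$ is of bounded size, the leading quadratic $(2i^*-p-1)^2/(8p)=\eta^2/(2p)$ is negligible, and the index $\min\{\lfloor i^*/2\rfloor,\lfloor(p+1-i^*)/2\rfloor\}$ differs from $(p-1)/4$ by only $O(1)$.

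The core computation is to apply the recursion~(\ref{eq:CorrRecurs}) twice to $d(L(p,q),\phi_a(i^*))$, reducing ultimately to $d(L(r',s),\,\cdot)$ where $r'=p\bmod q\in\{r,q-r\}$ and $s=q\bmod r'$; the innermost term is uniformly bounded by a constant read off from~(\ref{eq:CorrL3q})--(\ref{eq:CorrL5q}). Writing $\phi_a(i^*)=a\eta+C(p,q)-kp$, one computes $2\phi_a(i^*)+1-p-q=2a\eta-Kp$ for an explicit integer $K$. The crucial algebraic observation, enabled by $p=6q+\zeta r$, is that the values of $\eta$ and $K$ appropriate to $m$ make this quantity a constant multiple of $a-mp/6$. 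For instance, when $m=3$, $\zeta=1$, $\eta=1$, $q$ odd, one sees directly that $2a-p=2(a-mp/6)$, and an analogous collapse occurs for the other values of $m$ by adjusting $\eta$ and using the parity of $q$ to set the parity of $K$. The second-level quadratic $2\beta^*+1-q-r'$, with $\beta^*=\phi_a(i^*)\bmod q$, also collapses to a constant multiple of $a-mp/6$, and the two contributions combine coherently.

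The identity $\delta_a^\varepsilon(i^*)=2\,t_{\min\{\lfloor i^*/2\rfloor,\lfloor(p+1-i^*)/2\rfloor\}}$ then rearranges into an inequality of the form $(a-mp/6)^2\cdot(p-r)/(pqr)\le O(1)+2\,t_{\min\{\cdots\}}$. The $t$-term is itself $O(1)$: because $i^*$ is within $O(1)$ of the center, the standard L-space surgery bound $g(K)\le(p+2)/4$ (from $p/q\ge 2g(K)-1$) together with the monotonicity $t_s\ge t_{s+1}\ge t_s-1$ and $t_{g(K)}=0$ from~(\ref{eq:tsProperties}) forces $t$ at this index to be a bounded constant. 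Solving yields $(a-mp/6)^2\le C\,rp$, and tracking the numerical factors gives $|a-mp/6|<\sqrt{4rp/3}$. The main obstacle is the case analysis over $m\in\{0,1,2,3\}$, $\zeta\in\{\pm1\}$, and the parity of $q$: each combination requires the correct $\eta$, and the sign of $\varepsilon$ together with the activation of $\chi_{[0,q)}$ must be tracked separately. The boundary case $m=0$, where $a$ is small and the collapse $2a\eta-Kp\propto a-mp/6$ is most delicate, likely uses $\eta=-1$ so that $\phi_a(i^*)\in[0,q)$; the resulting inequality then takes the form $4a^2\approx pr$, which is in fact much stronger than the claimed bound.
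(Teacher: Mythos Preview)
Your proposal has the right high-level idea---test~(\ref{eq:Expected}) near the center $i=(p+1)/2$, push $d(L(p,q),\phi_a(i))$ through the recursion~(\ref{eq:CorrRecurs}) twice, and extract a bound on $(a-mp/6)^2$---but two points in the execution are off.

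First, the paper does not work with a single value $\delta_a^\varepsilon(i^*)$; it uses the \emph{difference} $\delta_a^\varepsilon\big(\tfrac{p+3}2\big)-\delta_a^\varepsilon\big(\tfrac{p+1}2\big)$. By~(\ref{eq:Expected}) and~(\ref{eq:tsProperties}) this difference is exactly $0$ or $2$, so there is no need to bound an individual $t_s$. Your argument that $t_s=O(1)$ near the center is correct (via $p/2\ge 2g(K)-1$), but it is extra machinery the paper avoids. After expanding the difference via two applications of~(\ref{eq:CorrRecurs}), the paper obtains, \emph{uniformly in $m$}, the closed form $\varepsilon\big(\tfrac{6\zeta}{pr}(a-\tfrac{mp}6)^2-\tfrac{m^2-6m\theta(q)}6\big)+C_0$ with $|C_0|<\tfrac92$, and the bound follows immediately.

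Second, your mechanism for producing the factor $(a-mp/6)$ is not right. You claim that by selecting $\eta$ according to $m$ one forces the first-level numerator $2a\eta-Kp$ to be a multiple of $a-mp/6$, and similarly at the second level. For $m=1,2$ this would require $K=\eta m/3$, but $K$ is fixed by the parity of $q$ and the mod-$p$ representative, not freely chosen; your example $m=3$ is a coincidence. What actually happens (in either the single-value or the difference computation) is that the first level contributes a term in $a^2/(pq)$, the second level a term in $(a-mq)^2/(qr)$, and it is only upon \emph{adding} these and substituting $p=6q+\zeta r$ that the cross terms cancel to leave $\tfrac{6}{pr}(a-\tfrac{mp}6)^2$. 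The integer $m$ enters solely through the hypothesis, which identifies the mod-$q$ reduction needed for the second recursion step; there is no case analysis on $m$ and no $m$-dependent choice of $\eta$. Your closing remark that $m=0$ ``likely uses $\eta=-1$'' reflects this misreading.
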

\begin{proof}
Since $S^3_{K}(p/2)\cong \varepsilon\mathbb T(p/q)$, there exists an integer $a$ satisfying (\ref{eq:aCond}) such that (\ref{eq:Expected}) holds.
It follows from (\ref{eq:tsProperties}) and (\ref{eq:Expected}) that 
\begin{equation}\label{eq:delta02}
\delta^{\varepsilon}_a(\frac{p+3}2)-\delta^{\varepsilon}_a(\frac{p+1}2)=0\text{ or }2.
\end{equation}

Using (\ref{eq:delta^eps}), we get
\begin{eqnarray}
&&\delta^{\varepsilon}_a(\frac{p+3}2)-\delta^{\varepsilon}_a(\frac{p+1}2)\nonumber\\
&=&\frac1{2p}-\frac{\bar{\theta}(\frac{p+3}2)}2-\varepsilon d(L(p,q),a+\frac{\bar{\theta}(q)p+q-1}2)+2\varepsilon\chi_{[0,q)}(a+\frac{\bar{\theta}(q)p+q-1}2)\nonumber\\
&&+\frac{\bar{\theta}(\frac{p+1}2)}2+\varepsilon d(L(p,q),\frac{\bar{\theta}(q)p+q-1}2)-2\varepsilon\chi_{[0,q)}(\frac{\bar{\theta}(q)p+q-1}2).\label{eq:Recurs1}
\end{eqnarray}
Let 
\begin{eqnarray*}
C_0&=&\varepsilon\zeta\left(-d(L(r,s),a-mq+\frac{\bar{\theta}(q)\zeta r+q-1}2)+d(L(r,s),\frac{\bar{\theta}(q)\zeta r+q-1}2)\right)\\
&&+\frac1{2p}+\frac12-\theta(\frac{p+1}2)+2\varepsilon\left(\chi_{[0,q)}(a+\frac{\bar{\theta}(q)p+q-1}2)-\chi_{[0,q)}(\frac{\bar{\theta}(q)p+q-1}2)\right).
\end{eqnarray*}
When $\zeta=1$, by the recursive formula (\ref{eq:CorrRecurs}), the right hand side of (\ref{eq:Recurs1}) becomes
\begin{eqnarray*}
&&\varepsilon\left(\frac{-(2a-\theta(q)p)^2+(\theta(q)p)^2}{4pq}+d(L(q,r),a-mq+\frac{\bar{\theta}(q)r+q-1}2)\right.\\
&&\quad\left.-d(L(q,r),\frac{\bar{\theta}(q)r+q-1}2)\right)\\
&&+\frac1{2p}+\frac12-\theta(\frac{p+1}2)+2\varepsilon\left(\chi_{[0,q)}(a+\frac{\bar{\theta}(q)p+q-1}2)-\chi_{[0,q)}(\frac{\bar{\theta}(q)p+q-1}2)\right)\\
&=&\varepsilon\left(-\frac{a^2}{pq}+\frac{\theta(q)a}q+\frac{(2a-2mq-\theta(q)r)^2-(\theta(q)r)^2}{4qr}\right)+C_0\\
&=&\varepsilon\left(\frac6{pr}(a-\frac{mp}6)^2-\frac{m^2-6m\theta(q)}6\right)+C_0.
\end{eqnarray*}
When $\zeta=-1$,  the right hand side of (\ref{eq:Recurs1}) becomes
\begin{eqnarray*}
&&\varepsilon\left(\frac{-(2a-\theta(q)p)^2+(\theta(q)p)^2}{4pq}+d(L(q,q-r),a-mq+\frac{-\bar{\theta}(q)r+q-1}2)\right.\\
&&\quad\left.-d(L(q,q-r),\frac{-\bar{\theta}(q)r+q-1}2)\right)\\
&&+\frac1{2p}+\frac12-\theta(\frac{p+1}2)+2\varepsilon\left(\chi_{[0,q)}(a+\frac{\bar{\theta}(q)p+q-1}2)-\chi_{[0,q)}(\frac{\bar{\theta}(q)p+q-1}2)\right)\\
&=&\varepsilon\left(-\frac{a^2}{pq}+\frac{\theta(q)a}q+\frac{(2a-2mq+\theta(q)r-q)^2-(\theta(q)r-q)^2}{4q(q-r)}\right.\\
&&\left.-d(L(q-r,r),a-mq+\frac{-\bar{\theta}(q)r+q-1}2)+d(L(q-r,r),\frac{-\bar{\theta}(q)r+q-1}2)\right)\\
&&+\frac1{2p}+\frac12-\theta(\frac{p+1}2)+2\varepsilon\left(\chi_{[0,q)}(a+\frac{\bar{\theta}(q)p+q-1}2)-\chi_{[0,q)}(\frac{\bar{\theta}(q)p+q-1}2)\right)\\
&=&\varepsilon\left(-\frac{a^2}{pq}+\frac{\theta(q)a}q+\frac{(a-mq+\theta(q)r-q)(a-mq)}{q(q-r)}-\frac{(a-mq-\bar{\theta}(q)r)(a-mq)}{(q-r)r}\right)+C_0\\
&=&\varepsilon\left(-\frac{a^2}{pq}+\frac{\theta(q)a}q-\frac{(a-mq+\theta(q)r)(a-mq)}{qr}\right)+C_0\\
&=&\varepsilon\left(-\frac6{pr}(a-\frac{mp}6)^2-\frac{m^2-6m\theta(q)}6\right)+C_0.
\end{eqnarray*}

Using (\ref{eq:CorrL3q}), (\ref{eq:CorrL5q}), we have $$|C_0|\le\frac65+\frac1{2p}+\frac12+2=\frac{37}{10}+\frac1{2p}<\frac92.$$ 
Moreover, $|\frac{m^2-6m\theta(q)}6|\le\frac32$. It follows from (\ref{eq:delta02}) that $$\left|\frac6{pr}(a-\frac{mp}6)^2\right|\le2+\frac32+\frac{9}{2}=8,$$
so our conclusion holds.
\end{proof}

\begin{lem}\label{lem:Ak+B+C}
Let $k$ be an integer satisfying 
\begin{equation}\label{eq:kRange}
0\le k<\frac1{48}\frac{p-13r+6}{\sqrt{3rp}}-\frac16.
\end{equation}
Let $$i_k=\frac{\bar{\theta}(q)p+q-1}2+a(6k)-kmp,\quad j_k=\frac{\bar{\theta}(q)\zeta r+q-1}2+a(6k)-kmp.$$
Then
$$\delta^{\varepsilon}_a(\frac{p+1}2+6k+1)-\delta^{\varepsilon}_a(\frac{p+1}2+6k)=Ak+B+C_k,$$
where 
\begin{eqnarray*}
A&=&\varepsilon\zeta\cdot\frac{2(6a-mp)^2}{pr}+\frac6p,\\
B&=&\varepsilon\left(\frac{6\zeta}{pr}(a-\frac{mp}6)^2-\frac{m^2-6m\theta(q)}6\right),\\
C_k&=&\varepsilon\zeta\big(-d(L(r,s),j_k+a-mq)+d(L(r,s),j_k)\big)\\
&&+\frac1{2p}+\frac12-\theta(\frac{p+1}2)+2\varepsilon\chi_{[0,q)}(i_k+a)-2\varepsilon\chi_{[0,q)}(i_k).
\end{eqnarray*}
\end{lem}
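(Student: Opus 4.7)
The plan is to substitute (\ref{eq:delta^eps}) into the difference and iterate the recursion (\ref{eq:CorrRecurs}) for $d(L(p,q),\cdot)$ exactly as in the proof of Lemma~\ref{lem:aEstimate}, now keeping the linear-in-$k$ shift $k(6a-mp)$ that appears in the arguments $i_k$ and $j_k$. Since $6k$ is even, the parity of $\frac{p+1}{2}+6k$ equals that of $\frac{p+1}{2}$, so the $-\bar\theta(i)/2$ summand contributes the $k$-independent term $\frac{1}{2}-\theta(\frac{p+1}{2})$ to $C_k$. The quadratic summand $(2i-p-1)^2/(8p)$ telescopes to $\frac{6k}{p}+\frac{1}{2p}$, with $\frac{6k}{p}$ going into $A$ and $\frac{1}{2p}$ into $C_k$. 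The cut-off summand $2\varepsilon\chi_{[0,q)}(\phi_a(\cdot))$ becomes the $\chi$-pair of $C_k$ once one checks via (\ref{eq:phi}) that $\phi_a(\frac{p+1}{2}+6k)\equiv i_k$ and $\phi_a(\frac{p+1}{2}+6k+1)\equiv i_k+a\pmod p$.

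The main work is to expand $-\varepsilon[d(L(p,q),i_k+a)-d(L(p,q),i_k)]$ by applying (\ref{eq:CorrRecurs}) twice through $L(q,r)\to L(r,s)$ when $\zeta=+1$, and three times through $L(q,q-r)\to L(q-r,r)\to L(r,s)$ when $\zeta=-1$. Writing each argument in the form $\mathrm{base}+k(6a-mp)$, the squared quantities $(2\,\cdot\,+1-p-q)^2$, $(2\,\cdot\,+1-q-r)^2$, etc.\ all share the common $k$-shift $2k(6a-mp)$, so in each difference $(u+c)^2-u^2=2uc+c^2$ only a linear-in-$k$ cross term survives (the $k^2$ pieces cancel). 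At the $L(p,q)$-level this cross term works out to $\frac{2ak(6a-mp)}{pq}$; at the $L(q,r)$- (or $L(q,q-r)$-) level it is $\frac{2(a-mq)k(6a-mp)}{qr}$ (or its $\zeta=-1$ analogue), and further sub-level cross terms in the $\zeta=-1$ case collapse by the same algebraic identity used in the proof of Lemma~\ref{lem:aEstimate}. The $k$-independent parts exactly reproduce the computation of Lemma~\ref{lem:aEstimate} and yield the constant $B$ together with the $d(L(r,s),\cdot)$ residuals comprising the rest of $C_k$. The $k$-linear contributions combine via the relation $p=6q+\zeta r$: the identity $-\frac{a}{pq}+\frac{\zeta(a-mq)}{qr}=\frac{\zeta(6a-mp)}{pr}$ collapses the mixed denominators $pq$ and $qr$ into $pr$, producing $\varepsilon\zeta\cdot\frac{2(6a-mp)^2}{pr}\cdot k$. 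Adding the $\frac{6k}{p}$ from the outer quadratic yields $Ak$.

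The main obstacle is bookkeeping: tracking signs through the alternating recursion so that the $d(L(r,s),\cdot)$ differences carry the correct factor of $\varepsilon\zeta$ in $C_k$ (namely $-\varepsilon$ after two recursions, $+\varepsilon$ after three), and handling the three-level algebraic collapse in the $\zeta=-1$ case. The range hypothesis (\ref{eq:kRange}), together with the bound $|6a-mp|<4\sqrt{3rp}$ implicit in the proof of Lemma~\ref{lem:aEstimate}, ensures $|k(6a-mp)|<\frac{p-13r+6}{12}$; this keeps $i_k, i_k+a\in[0,p+q)$ and $j_k, j_k+a-mq\in[0,q)$ so that the recursion formula applies at every step and the relation $j_k=i_k\bmod q$ holds as needed.
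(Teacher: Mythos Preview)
Your proposal is correct and follows essentially the same approach as the paper: substitute (\ref{eq:delta^eps}), separate the four summands, iterate the recursion (\ref{eq:CorrRecurs}) twice for $\zeta=+1$ and three times for $\zeta=-1$, and use the range hypothesis together with Lemma~\ref{lem:aEstimate} to verify (\ref{eq:ikjk}) so that each application of the recursion is legitimate. Your explicit isolation of the identity $-\tfrac{a}{pq}+\tfrac{\zeta(a-mq)}{qr}=\tfrac{\zeta(6a-mp)}{pr}$ (a consequence of $p=6q+\zeta r$) is exactly what makes the $k$--linear pieces collapse to the coefficient $A$; the paper performs the same simplification but leaves this identity implicit inside the long algebraic strings. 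One small wording issue: there are no ``$k^2$ pieces'' to cancel, since in each quadratic difference $(u+c)^2-u^2$ the shift $c$ (namely $2a$, $2(a-mq)$, etc.) is $k$--independent and only $u$ carries the $k(6a-mp)$ term; the result is automatically affine in $k$.
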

\begin{proof}
By  (\ref{eq:kRange}), we have
\begin{equation}\label{eq:6k+1bound}
(6k+1)\sqrt{\frac{4rp}3}<\frac{p-13r+6}{12}\le\frac{q-2r+1}2.
\end{equation}

It follows from (\ref{eq:aCond}), (\ref{eq:6k+1bound}) and Lemma~\ref{lem:aEstimate} that 
\begin{equation}\label{eq:ikjk}
0\le i_k< i_k+a<p+q,\qquad 0\le j_k,j_k+a-mq<q.
\end{equation}
For example, 
\begin{eqnarray*}
j_k+a-mq&=&j_k+a-m\frac{p-\zeta r}6\\
&=&\frac{\bar{\theta}(q)\zeta r+q-1}2+(6k+1)(a-\frac{mp}6)+\frac{m\zeta r}6\\
&<&\frac{r+q-1}2+\frac{q-2r+1}2+\frac r2\\
&=&q.
\end{eqnarray*}
Similar argument shows other inequalities.

Using (\ref{eq:delta^eps}), we can compute
\begin{eqnarray}
&&\delta^{\varepsilon}_a(\frac{p+1}2+6k+1)-\delta^{\varepsilon}_a(\frac{p+1}2+6k)\nonumber\\
&=&\frac{(6k+1)^2}{2p}-\frac{\bar{\theta}(\frac{p+1}2+6k+1)}2-\varepsilon d(L(p,q),i_k+a)+2\varepsilon\chi_{[0,q)}(i_k+a)\nonumber\\
&&-\frac{(6k)^2}{2p}+\frac{\bar{\theta}(\frac{p+1}2+6k)}2+\varepsilon d(L(p,q),i_k)-2\varepsilon\chi_{[0,q)}(i_k).\label{eq:Recurs2}
\end{eqnarray}
When $\zeta=1$,
using (\ref{eq:ikjk}) and the recursion formula  (\ref{eq:CorrRecurs}), the right hand side of (\ref{eq:Recurs2}) becomes
\begin{eqnarray*}
&&\varepsilon\left(\frac{-(2i_k+2a+1-p-q)^2+(2i_k+1-p-q)^2}{4pq}+d(L(q,r),j_k+a-mq)-d(L(q,r),j_k)\right)\\
&&+\frac{12k+1}{2p}+\frac12-\theta(\frac{p+1}2)+2\varepsilon\chi_{[0,q)}(i_k+a)-2\varepsilon\chi_{[0,q)}(i_k)\\
&=&\varepsilon\left(\frac{-a(a(12k+1)-2kmp-\theta(q)p)}{pq}+\frac{(2j_k+2a-2mq+1-q-r)^2-(2j_k+1-q-r)^2}{4qr}\right)\\
&&+\frac{6k}{p}+C_k\\
&=&\varepsilon\left(\frac{-a(a(12k+1)-2kmp-\theta(q)p)}{pq}+\frac{(a-mq)((12k+1)a-2kmp-mq-\theta(q)r)}{qr}\right)\\
&&+\frac{6k}{p}+C_k\\
&=&Ak+B+C_k.
\end{eqnarray*}
When $\zeta=-1$, the right hand side of (\ref{eq:Recurs2}) becomes
\begin{eqnarray*}
&&\varepsilon\left(\frac{-(2i_k+2a+1-p-q)^2+(2i_k+1-p-q)^2}{4pq}+d(L(q,q-r),j_k+a-mq)-d(L(q,q-r),j_k)\right)\\
&&+\frac{12k+1}{2p}+\frac12-\theta(\frac{p+1}2)+2\varepsilon\chi_{[0,q)}(i_k+a)-2\varepsilon\chi_{[0,q)}(i_k)\\
&=&\varepsilon\bigg(\frac{-a(a(12k+1)-2kmp-\theta(q)p)}{pq}+\frac{(2j_k+2a-2mq+1-2q+r)^2-(2j_k+1-2q+r)^2}{4q(q-r)}\\
&&-d(L(q-r,r),j_k+a-mq)+d(L(q-r,r),j_k)\bigg)\\
&&+\frac{12k+1}{2p}+\frac12-\theta(\frac{p+1}2)+2\varepsilon\chi_{[0,q)}(i_k+a)-2\varepsilon\chi_{[0,q)}(i_k)\\
&=&\varepsilon\bigg(\frac{-a(a(12k+1)-2kmp-\theta(q)p)}{pq}+\frac{(a-mq)(-\bar{\theta}(q)r+12ka-2kmp+a-mq-q+r)}{q(q-r)}\\
&&-\frac{(2j_k+2a-2mq+1-q)^2-(2j_k+1-q)^2}{4(q-r)r}\bigg)+\frac{6k}{p}+C_k\\
&=&\varepsilon\left(\frac{-a(a(12k+1)-2kmp-\theta(q)p)}{pq}-\frac{(a-mq)((12k+1)a-2kmp-mq+\theta(q)r)}{qr}\right)\\
&&+\frac{6k}{p}+C_k\\
&=&Ak+B+C_k.
\end{eqnarray*}
\end{proof}

\begin{proof}[Proof of Proposition~\ref{prop:pLarge}]
If $S^3_{K}(p/2)\cong \varepsilon\mathbb T(p/q)$, then (\ref{eq:Expected}) holds, so
\begin{equation}\label{eq:delta026k}
\delta^{\varepsilon}_a(\frac{p+1}2+6k+1)-\delta^{\varepsilon}_a(\frac{p+1}2+6k)=0\text{ or }2
\end{equation}
for all $k$ satisfying (\ref{eq:kRange}). If $p\ge192r(36r+2)^2$, 
then 
$$
(6\cdot 6r+1)\cdot8\sqrt{3r}\le\sqrt{p}-1<\frac{p-13r+6}{\sqrt p},
$$
hence $k=6r$ satisfies (\ref{eq:kRange}).

Let $A,B,C_k$ be as in Lemma~\ref{lem:Ak+B+C}.
If $A\ne0$, then $Ak+B+C$ is equal to 0 or $2$ for at most two values of $k$ for any given $C$.
Given $p,q,a,\varepsilon,\zeta$, as $k$ varies, $C_k$ can take at most $3r$ values.  It follows that $Ak+B+C_k$ can not be 0 or 2 for $k=0,1,\dots,6r$.
As a consequence, if $p\ge192r(36r+2)^2$, 
then (\ref{eq:delta026k}) does not hold.

The remaining case we need to consider is that
$A=0$. In this case 
\begin{equation}\label{eq:A=0}
r=3, \quad\varepsilon\zeta=-1,\quad 6a-mp=\pm3.
\end{equation}
 So 
\begin{eqnarray}
B+C_k&=&d(L(3,s),j_k+a-mq)-d(L(3,s),j_k)+\frac12-\frac{\varepsilon m^2}6\nonumber\\
&&-\theta(\frac{p+1}2)+\varepsilon \big(m\theta(q)+2\chi_{[0,q)}(i_k+a)-2\chi_{[0,q)}(i_k)\big).\label{eq:B+C}
\end{eqnarray}
Note that the second row in the above expression is always an integer.
Using (\ref{eq:CorrL3q}), the value of $d(L(3,s),j_k+a-mq)-d(L(3,s),j_k)$ is $0$ or $\pm\frac23$, so $B+C_k$ is an integer only if $m=1$ or $3$.

If $m=3$, then $a=\frac{p-1}2\equiv1\pmod3$. Since $$j_k=\frac{3\zeta\bar{\theta}(q)+q-1}2+a(6k)-kmp\equiv1-s\pmod 3,$$
it follows from (\ref{eq:CorrL3q}) that
$$d(L(3,s),j_k+a-mq)-d(L(3,s),j_k)=d(L(3,s),2-s)-d(L(3,s),1-s)=\pm\frac23,$$ so $B+C_k$ is not an integer.

If $m=1$, $a=\frac{p\pm3}6\in\{q,q+\zeta\}$. If $a=q$, $B+C_k$ is not an integer. So we must have $a=q+\zeta$. Consider the first row on the right hand side of (\ref{eq:B+C}) and let $k=0$, we get
$$d(L(3,s),1-s+\zeta)-d(L(3,s),1-s)+\frac12-\frac{\varepsilon}6.$$ 
Since this number is an integer, using (\ref{eq:CorrL3q}), we get
$$s=
\left\{
\begin{array}{ll}
2, &\text{if }\varepsilon=1,\\
1, &\text{if }\varepsilon=-1.
\end{array}
\right.$$

We consider $\delta^{\varepsilon}_a(6)-\delta^{\varepsilon}_a(7)$, which is 0 by (\ref{eq:Expected}).

If $\varepsilon=1$, it follows from (\ref{eq:A=0}) that $\zeta=-1$.
Since $a=q+\zeta=q-1$, we can compute 
$$\phi_a(6)=(q-1)(6-\frac{p+1}2)+\frac{\bar{\theta}(q)p+q-1}2\equiv6q-6\pmod p.$$
So we have
\begin{eqnarray*}
&&\delta^{+1}_a(6)-\delta^{+1}_a(7)\\
&=&\frac{(11-p)^2}{8p}-\frac{1}2-d(L(p,q),6q-6)-\frac{(13-p)^2}{8p}+d(L(p,q),q-4)-2\\
&=&\frac{p-12}{2p}-\frac12-\frac{(5q-8)^2-(-5q-4)^2}{4pq}+d(L(q,q-3),q-6)-d(L(q,q-3),q-4)-2\\
&=&-\frac{6}{p}+\frac{30q-12}{pq}+\frac{(-8)^2-(-4)^2}{4q(q-3)}-d(L(q-3,3),q-6)+d(L(q-3,3),q-4)-2\\
&=&-\frac{6}{p}+\frac{30q-12}{pq}+\frac{12}{q(q-3)}-\frac{(q-11)^2-(q-7)^2}{12(q-3)}+d(L(3,2),2)-d(L(3,2),1)-2\\
&=&-2,
\end{eqnarray*}
a contradiction.

If $\varepsilon=-1$, then $\zeta=1$.
Since $a=q+\zeta=q+1$, we get
$$\phi_a(6)=(q+1)(6-\frac{p+1}2)+\frac{\bar{\theta}(q)p+q-1}2\equiv2\pmod p.$$
So we have
\begin{eqnarray*}
&&\delta^{-1}_a(6)-\delta^{-1}_a(7)\\
&=&\frac{(11-p)^2}{8p}-\frac{1}2+d(L(p,q),2)-2-\frac{(13-p)^2}{8p}-d(L(p,q),q+3)\\
&=&\frac{p-12}{2p}-\frac12+\frac{(5-p-q)^2-(2q+7-p-q)^2}{4pq}-d(L(q,3),2)+d(L(q,3),3)-2\\
&=&-\frac{6}{p}+\frac{(q+1)(p-6)}{pq}-\frac{(5-q-3)^2-(7-q-3)^2}{12q}+d(L(3,1),2)-d(L(3,1),0)-2\\
&=&-2,
\end{eqnarray*}
a contradiction.
\end{proof}

\end{document}